\documentclass[a4paper,12pt]{amsart}

\usepackage[english]{babel}
\usepackage{amsmath,amsthm,amssymb,amsfonts}
\usepackage[all]{xy}
\usepackage{multicol}
\usepackage{todonotes}
\usepackage[shortlabels]{enumitem} 
\usepackage[dvipsnames]{xcolor}
\usepackage{pdflscape}
\usepackage{amsthm}
\usepackage{mathtools}
\usepackage{extarrows}

\usepackage{thmtools}

\usepackage[export]{adjustbox}
\usepackage{graphicx}
\usepackage{subcaption}
\usepackage{tikz}
\usetikzlibrary{mindmap,backgrounds, calc}
\usetikzlibrary{matrix,chains,scopes,positioning,arrows,fit}
\usetikzlibrary{positioning, shapes, patterns}
\usetikzlibrary{automata} 
\usetikzlibrary{shapes.geometric, arrows, arrows.meta}
\usetikzlibrary{positioning,decorations.markings}
\usepackage{caption}
\usepackage[normalem]{ulem}
\usepackage{marginnote}
\usepackage{cancel}
\usepackage{bbm}
\usepackage{comment}

\setlist[enumerate]{itemsep=0.2em, topsep=0.25em}

\makeatletter
\newcommand*\bigcdot{\mathpalette\bigcdot@{.5}}
\newcommand*\bigcdot@[2]{\mathbin{\vcenter{\hbox{\scalebox{#2}{$\m@th#1\bullet$}}}}}
\makeatother

\newcommand{\vertiii}[1]{{\left\vert\kern-0.25ex\left\vert\kern-0.25ex\left\vert #1 
		\right\vert\kern-0.25ex\right\vert\kern-0.25ex\right\vert}}

\makeatletter
\renewcommand{\tocsection}[3]{%
	\indentlabel{\@ifnotempty{#2}{\bfseries\ignorespaces#1 #2\quad}}\bfseries#3}
\renewcommand{\tocsubsection}[3]{%
	\indentlabel{\@ifnotempty{#2}{\ignorespaces#1 #2\quad}}#3}

\def\@tocline#1#2#3#4#5#6#7{\relax
	\ifnum #1>\c@tocdepth 
	\else
	\par \addpenalty\@secpenalty\addvspace{#2}%
	\begingroup \hyphenpenalty\@M
	\@ifempty{#4}{%
		\@tempdima\csname r@tocindent\number#1\endcsname\relax
	}{%
		\@tempdima#4\relax
	}%
	\parindent\z@ \leftskip#3\relax \advance\leftskip\@tempdima\relax
	\rightskip\@pnumwidth plus1em \parfillskip-\@pnumwidth
	#5\leavevmode\hskip-\@tempdima{#6}\nobreak
	\leaders\hbox{$\m@th\mkern \@dotsep mu\hbox{.}\mkern \@dotsep mu$}\hfill
	\nobreak
	\hbox to\@pnumwidth{\@tocpagenum{\ifnum#1=1\bfseries\fi#7}}\par
	\nobreak
	\endgroup
	\fi}
\AtBeginDocument{%
	\expandafter\renewcommand\csname r@tocindent0\endcsname{0pt}
}
\def\l@subsection{\@tocline{2}{0pt}{2.5pc}{5pc}{}}
\makeatother

\usepackage{tikz}
\usetikzlibrary{mindmap,backgrounds, calc}
\usetikzlibrary{matrix,chains,scopes,positioning,arrows,fit}
\usetikzlibrary{positioning, shapes, patterns}
\usetikzlibrary{automata} 
\usetikzlibrary{shapes.geometric, arrows, arrows.meta}
\usetikzlibrary{positioning,decorations.markings}

\usepackage[a4paper,left=3cm,right=3cm,top=2.5cm,bottom=3cm]{geometry}

\theoremstyle{definition}
\newcounter{maincoro}

\newtheorem*{theoremA}{Theorem A}

\newtheorem{theorem}{Theorem}[section]
\newtheorem{lemma}[theorem]{Lemma}

\newtheorem{corollary}[theorem]{Corollary}

\theoremstyle{definition}
\newcounter{maintheorem}

\theoremstyle{remark}

\numberwithin{equation}{section}

\newcommand{\R}{\mathbb{R}}
\newcommand{\C}{\mathbb{C}}

\newcommand{\T}{\mathbb{T}}
\newcommand{\D}{\mathbb{D}}

\makeatletter
\tikzset{
	arrow/.style   = {-{Implies}, double equal sign distance, line width=0.65pt},
	iffarrow/.style= {{Implies}-{Implies}, double equal sign distance, line width=0.65pt},
}

\renewcommand{\tocsection}[3]{%
	\indentlabel{\@ifnotempty{#2}{\bfseries\ignorespaces#1 #2\quad}}\bfseries#3}
\renewcommand{\tocsubsection}[3]{%
	\indentlabel{\@ifnotempty{#2}{\ignorespaces#1 #2\quad}}#3}

\def\@tocline#1#2#3#4#5#6#7{\relax
	\ifnum #1>\c@tocdepth 
	\else
	\par \addpenalty\@secpenalty\addvspace{#2}%
	\begingroup \hyphenpenalty\@M
	\@ifempty{#4}{%
		\@tempdima\csname r@tocindent\number#1\endcsname\relax
	}{%
		\@tempdima#4\relax
	}%
	\parindent\z@ \leftskip#3\relax \advance\leftskip\@tempdima\relax
	\rightskip\@pnumwidth plus1em \parfillskip-\@pnumwidth
	#5\leavevmode\hskip-\@tempdima{#6}\nobreak
	\leaders\hbox{$\m@th\mkern \@dotsep mu\hbox{.}\mkern \@dotsep mu$}\hfill
	\nobreak
	\hbox to\@pnumwidth{\@tocpagenum{\ifnum#1=1\bfseries\fi#7}}\par
	\nobreak
	\endgroup
	\fi}
\AtBeginDocument{%
	\expandafter\renewcommand\csname r@tocindent0\endcsname{0pt}
}
\def\l@subsection{\@tocline{2}{0pt}{2.5pc}{5pc}{}}
\makeatother

\DeclareMathOperator{\re}{Re}

\newcommand{\nn}[1]{{\left\vert\kern-0.25ex\left\vert\kern-0.25ex\left\vert #1 
		\right\vert\kern-0.25ex\right\vert\kern-0.25ex\right\vert}}

\renewcommand{\geq}{\geqslant}
\renewcommand{\leq}{\leqslant}

\newcommand{\NA}{\operatorname{NA}}

\newcommand{\e}{\varepsilon}

\newcommand{\supp}{\operatorname{supp}}

\newcommand{\eps}{\e}

\usepackage[bookmarks=true,hyperindex,pdftex,colorlinks,citecolor=cyan]{hyperref}
\hypersetup{colorlinks=true,  linkcolor=blue, citecolor=red, urlcolor=cyan}
\usepackage{cleveref}

\begin{document}

	\title[On the Bollobás theorem for complex $C(K)$-spaces]{On the Bollobás theorem for complex $C(K)$-spaces}
	
	\author[S.~Dantas]{Sheldon Dantas}
	\address[S.~Dantas]{Czech Technical University in Prague, FEE, Department of Mathematics, Technická 2, 16627, Prague 6, Czech Republic \newline
		\href{https://orcid.org/0000-0001-8117-3760}{ORCID: \texttt{0000-0001-8117-3760}}}
	\email{\texttt{sheldon.dantas@fel.cvut.cz}}
	\urladdr{sheldondantas.com}
	
	\author[H.~Del R\'{\i}o]{Helena del R\'{\i}o} 
	\address[H.~Del R\'{i}o]{Department of Mathematical Analysis and Institute of Mathematics (IMAG), University of Granada, E-18071 Granada, Spain \newline
		\href{https://orcid.org/0009-0004-5078-6993}{ORCID: \texttt{0009-0004-5078-6993} }}
	\email[]{\texttt{helenadelrio@ugr.es}}

	\begin{abstract}  Let $K$ and $S$ be compact Hausdorff spaces. We prove a Bollobás-type theorem for operators between complex spaces of continuous functions. More precisely, every operator that almost attains its norm at an initial function can be approximated by a norm-attaining operator whose norm-attaining function remains close to the original one. Our proof combines the iterative method developed previously in the real case with the recent construction for complex measures. In particular, our result provides a quantitative strengthening of the complex version of the Johnson-Wolfe density theorem. As a byproduct of our construction, the same conclusion can be obtained when considering only compact operators.
	\end{abstract}

	\thanks{ }
	
	\subjclass[2020]{Primary 46B20; Secondary 46E15, 47A30, 47B38}
	\keywords{Bishop--Phelps--Bollobás property; norm-attaining operators;
		spaces of continuous functions; complex Banach spaces}
	
	\maketitle
	
	
	\thispagestyle{plain}

	\section{Introduction}
	
	The Bishop-Phelps theorem states that the norm-attaining functionals on a Banach space are norm dense in its dual \cite{BishopPhelps}. Bollobás later obtained a quantitative refinement of this result: if a functional almost attains its norm at a given point, then both the functional and the point can be approximated by a norm-attaining pair \cite{Bollobas}. The corresponding problem for bounded linear operators led to the Bishop-Phelps-Bollobás property for operators, introduced
	and systematically studied by Acosta, Aron, García, and Maestre \cite{AAGM}. We refer to \cite{DGMR} for a general account of the subsequent development of the subject.
	
	A classical result of Johnson and Wolfe asserts that, whenever $K$ and $S$ are compact Hausdorff spaces, norm-attaining operators from the real space $C(K)$ into the real space $C(S)$ are dense in $\mathcal L(C(K),C(S))$ \cite{JohnsonWolfe}. This result was strengthened by Acosta, Becerra-Guerrero, Choi, Ciesielski, Kim, Lee, Lourenço and Martín, who proved that the pair $(C(K),C(S))$ has the Bishop-Phelps-Bollobás property in the real case \cite{ContinuousFunctions}. Their proof is based on the representation
	of operators by weak-star continuous families of measures and on an iterative perturbation procedure. However, several steps of the argument use the order structure of real measures and cannot be transferred
	directly to the complex setting.
	
	The corresponding complex density problem for $C(K)$-spaces remained open for several decades. It was just recently solved by García, Maestre and Rodríguez-Vidanes \cite{GMRV}. Their construction replaces the
	real-valued sign decomposition by a complex argument involving polar decompositions of complex measures, approximation by modulus one continuous functions and lower semicontinuity properties of the total variation. These tools make it possible to perturb a weak-star continuous family of complex measures while preserving the control needed in the iterative construction.
	
	The purpose of this paper is to combine this recent complex measure-theoretic construction with the Bishop-Phelps-Bollobás arguments developed in \cite{ContinuousFunctions}. We prove that, for arbitrary compact Hausdorff spaces $K$ and $S$, the pair $(C(K),C(S))$ has the Bishop-Phelps-Bollobás property for operators
	over the complex field. Thus, the real result from \cite{ContinuousFunctions} extends to
	complex spaces. Moreover, the norm-attaining density theorem of \cite{GMRV} is strengthened in the following quantitative sense: both the operator and a prescribed point at which it almost attains its norm can be simultaneously approximated by a norm-attaining pair. In other words, we prove the following result.
	
	Let $K$ and $S$ be compact Hausdorff spaces. Consider $C(K)$ and $C(S)$ as {\bf complex} Banach spaces with the supremum norm. The symbol $\NA(C(K), C(S))$ stands for the set of all bounded linear operators from $C(K)$ into $C(S)$ that are norm-attaining.
	
	\begin{theoremA} \label{theoremA} For every $0 < \eps < 1$, there exists $\eta(\eps) > 0$ such that whenever $T_0 \in S_{\mathcal{L}(C(K), C(S))}$ and $f_0 \in S_{C(K)}$ satisfy $\|T_0 f_0\| > 1 - \eta(\eps)$, there are $T \in S_{\mathcal{L}(C(K), C(S))}$ and $h \in S_{C(K)}$ such that 
		\begin{equation*}
			\|T(h)\| = 1, \ \ \ \|T - T_0\| < \eps \ \ \ \mbox{and} \ \ \ \|h - f_0\| < \eps.
		\end{equation*}
		In particular, we have that
		\begin{equation*}
			\overline{\NA(C(K), C(S))}^{\|\cdot\|_{\infty}} = \mathcal{L}(C(K), C(S)).
		\end{equation*}
	\end{theoremA}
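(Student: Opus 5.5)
We follow the scheme of \cite{ContinuousFunctions} and replace each step that uses the order structure of real measures by the complex tools of \cite{GMRV}. First we represent $T_0$ through its family of measures: $T_0^*\delta_s=\mu_s\in C(K)^*$, where $s\mapsto\mu_s$ is weak-star continuous and $\|T_0\|=\sup_s\|\mu_s\|=1$. Since $\|T_0f_0\|>1-\eta$, there is $s_0\in S$ with $|\int f_0\,d\mu_{s_0}|>1-\eta$. Multiplying $f_0$ by a unimodular constant, we may assume this integral is real and positive. Let $\mu_{s_0}=g\,|\mu_{s_0}|$ be the polar decomposition. Then $\int (1-\re(f_0 g))\,d|\mu_{s_0}|<\eta$, so $f_0$ is close to $\overline{g}$ on a set $A$ carrying almost all of $|\mu_{s_0}|$. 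Chebyshev's inequality gives such an $A$ with $|f_0\,g-1|<\sqrt{2\eta}$ on $A$ and $|\mu_{s_0}|(K\setminus A)<\sqrt{\eta}$. By inner regularity we take $A$ compact.

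Next we build $h$. The plan is to choose a compact $A'\subset A$ and an open $U\supset A'$ such that $f_0$ is almost unimodular on $U$, with $|f_0|\ge 1-\sqrt{2\eta}$ there. We then set $h=f_0/|f_0|$ on a compact neighbourhood of $A'$ and interpolate radially to $f_0$ outside $U$, using Urysohn's lemma. This gives $\|h-f_0\|\lesssim\sqrt\eta$, $|h|=1$ on $A'$, and $h\overline{g}\approx 1$ on $A'$ up to $\sqrt\eta$. Following \cite{GMRV}, we approximate $g$ in $L^1(|\mu_{s_0}|)$ by modulus-one continuous functions and use lower semicontinuity of the total variation. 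Then, on an open neighbourhood $V$ of $s_0$ (chosen via weak-star continuity), the measures $\mu_s$ for $s\in V$ still put almost all their mass near $A'$ with phase almost $\overline{h}$.

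The operator $T$ is then defined by perturbing the measures. Take an Urysohn function $\varphi:S\to[0,1]$ with $\varphi(s_0)=1$ and $\supp\varphi\subset V$. Set
\[
\nu_s=(1-\varphi(s))\,\mu_s+\varphi(s)\Bigl(\mu_s|_{A'}\,\tfrac{\overline{h}\,|\,\cdot\,|}{\cdots}+\lambda_s\Bigr),
\]
in spirit: on $A'$ we rotate the phase of $\mu_s$ to exactly $\overline{h}$, namely $d\mu_s|_{A'}\mapsto \overline{h}\,d|\mu_s|_{A'}$, which is done weak-star continuously via the continuous unimodular approximants of \cite{GMRV}. We discard the mass off $A'$ and renormalize, so that $\nu_{s_0}$ is a positive multiple of $\overline{h}\,|\nu_{s_0}|$ concentrated where $|h|=1$, with $\|\nu_{s_0}\|=1$. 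Then $(Th)(s_0)=\int h\,d\nu_{s_0}=1$, and we must keep $\|\nu_s\|\le1$ for all $s$. The total perturbation is $O(\eta^{1/4})$, which fixes $\eta(\eps)$. Since the real proof needs several iterations, we mimic its iterative procedure: we build a sequence of operators $T_n$ and functions $h_n$ with geometrically decreasing perturbations and shrinking sets $A_n$, neighbourhoods $V_n$ of $s_0$, and phase errors. The limits give $T$, $h$ with $\|T\|=\|Th\|=1$. Compactness is preserved because each step adds finite-rank-like perturbations built from continuous families, and the limit is taken in norm.

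The main obstacle is this last step. The phase rotation must be defined continuously in $s$ while simultaneously keeping $\sup_s\|\nu_s\|\le 1$, since the rotation cannot increase total variation only if it is done via $|\mu_s|$, which is not weak-star continuous in $s$. We would try first to use the \cite{GMRV} device of replacing $|\mu_s|$ by $\overline{u}\mu_s$, with $u$ a continuous unimodular approximant of the phase of $\mu_{s_0}$. We would then control the resulting error by lower semicontinuity of $s\mapsto\|\mu_s|_U\|$ on open sets $U$, absorbing the error at each stage of the iteration.
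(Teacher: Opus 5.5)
The localization step is fine in spirit and plays the role of the paper's Lemma~\ref{lemma2}. There is one small slip: Chebyshev with threshold $\sqrt\eta$ gives $|f_0g-1|<\sqrt2\,\eta^{1/4}$ on $A$, not $\sqrt{2\eta}$. The paper also avoids the polar decomposition at this stage by working with the level set $\{|f_0|>c\}$.

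The genuine gap is the norm-attainment step, which is the whole difficulty and which you leave ``in spirit''. Because $s\mapsto\|\mu_s\|$ is only lower semicontinuous, rotating and cutting off mass near $s_0$ and then renormalizing cannot make $s_0$ a point where the norm is attained. Concretely, take $K=[-1,1]$, $S=[0,1]$, $s_0=0$, $f_0\equiv1$ and $\mu_s=(1-\beta)\delta_0+\beta\rho_s$. Here $\rho_{1/k}$ has density $\frac{k}{2}\,\mathrm{sign}(\sin(2\pi k^2t))$ on $[-1/k,1/k]$, $\rho_s$ is linear in $s$ on each $[1/(k+1),1/k]$, and $\rho_0=0$. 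Then $\|\mu\|=\|\mu_{1/k}\|=1$ and $\int f_0\,d\mu_0=1-\beta$. Consider any perturbation $\nu_s=m_s\mu_s$ with $(s,t)\mapsto m_s(t)$ continuous; this covers rotation by continuous unimodular approximants, Urysohn cut-offs and continuous renormalization. For such $\nu$ you get $\|\nu_{1/k}\|\to|m_0(0)|$ while $\|\nu_0\|=(1-\beta)|m_0(0)|$. The resulting inequality $\limsup_k\|\nu_{1/k}\|\ge\|\nu_0\|/(1-\beta)$ survives norm limits. It forces $\|\nu\|>\|\nu_0\|$ whenever $\nu_0\neq0$, so iterating such steps on neighbourhoods $V_n$ of the fixed point $s_0$ can never make $s_0$ an attaining point. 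Beyond this, your iteration has no one-step estimate showing that the defect $\|\nu\|-\re\int h\,d\nu_s$ contracts by a fixed factor while the perturbations stay summable. The bound $O(\eta^{1/4})$ is not derived from anything.

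The missing idea is to keep $h$ fixed and let the attaining point move. In the paper, Lemma~\ref{lemma2} fixes $h$ once and arranges $|\mu_1(s)|(A)=0$ on an open set $U$, where $A=\{|f_0|\le b\}\supseteq\{|h|<1\}$. Lemma~\ref{lemma3} then upgrades $\re\int h\,d\mu(s)>\|\mu\|-\delta$ on $U$ to $\re\int h\,d\mu'(s)>\|\mu'\|-r\delta$ on some nonempty open $U'\subseteq U$, through a dichotomy:
\begin{itemize}
\item[(i)] If $\sup_{s\in U}\|\mu(s)\|\le\|\mu\|-(1-r)\delta$, there is room to add $(1-r)\delta\,\psi(s)\overline{h(t_0)}\delta_{t_0}$ with $t_0\notin A$ without increasing $\|\mu\|$.
\item[(ii)] Otherwise one picks $s_1\in U$, in general different from $s_0$, with $\|\mu(s_1)\|>\|\mu\|-(1-r)\delta$. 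Writing $d\mu(s_1)=\theta\,d|\mu(s_1)|$, one multiplies by $(1-\psi(s))+\psi(s)q$, where $q=P\circ u$ satisfies $\|q\|\le1$ and approximates $\overline{h\theta}$ in $L_1(|\mu(s_1)|)$.
\end{itemize}
The near-maximality of $\|\mu(s_1)\|$ is exactly what turns lower semicontinuity of $s\mapsto\int(1-\varphi)\,d|\mu(s)|$, with $\varphi=|q-1|/2$, into the upper bound $\int|q-1|\,d|\mu(s)|<\sqrt{2\|\mu\|\delta}+2\delta$ for $s$ near $s_1$. This uses Cauchy--Schwarz applied to $\int|h\theta-1|\,d|\mu(s_1)|$, and it is the summable one-step estimate your plan lacks. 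Both operations preserve $|\mu(s)|(A)=0$, so $h$ never needs to change. Norm attainment of the limit $\nu$ is read off from $\|Th\|=\sup_s|\int h\,d\nu(s)|$ along points $s_n\in U_n$, not at a prescribed $s_0$.
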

	
	From our construction, we get that the same conclusion is obtained for compact operators. See Corollary \ref{compact-case} by the end of the manuscript.

	\section{Notation}
	
	Throughout the entire manuscript, $K$ and $S$ will be compact Hausdorff spaces. As we have mentioned in the introduction, the Banach spaces $C(K)$ and $C(S)$ are viewed as {\bf complex} spaces with the supremum norm. Let $\mathcal{M}(K):= C(K)^*$ be the space of complex regular Borel measures on $K$ with the total variation norm. If $\nu \in \mathcal{M}(K)$, then we write $d\nu = \theta d|\nu|$ for a polar decomposition, where $|\theta|=1$ $|\nu|$-almost everywhere. Recall that changing the values of $\theta$ on a $|\nu|$-null set does not alter the measure $\nu$ or any integral with respect to $\nu$. Every operator $T \in \mathcal{L}(C(K), C(S))$ has a unique representing map $\mu: S \rightarrow \mathcal{M}(K)$ which is $w^*$-continuous and satisfies 
	\begin{equation*}
		[Tf](s) = \int_K f d \mu(s) 
	\end{equation*}
	for every $f \in C(K)$ and $s \in S$. Moreover, $\|T\| = \|\mu\| = \sup_{s \in S} \|\mu(s)\|$. See, for instance, \cite[Theorem 1, page 490]{DunfordSchwartz} (or \cite[Lemma 2.1]{ContinuousFunctions}). See also \cite{Rudin} for a general background on spaces of continuous functions.
	
	Throughout this note, $B_X$ and $S_X$ denote the closed unit ball and the unit sphere of a Banach space $X$, respectively. We write $C(K)$, $C(K,[0,1])$ for the space of continuous functions from $K$ into $[0,1]$, $\mathcal{L}(C(K),C(K))$ the Banach space of bounded linear operators from $C(K)$ into itself and $\mathcal{K}(C(K),C(S))$ the closed subspace of all compact operators from $C(K)$ into $C(S)$.

	\section{Auxiliary lemmas}

	In this section, we collect all the lemmas we need to prove Theorem A. Although some parts of the proof closely follow the arguments of García, Maestre and Rodríguez-Vidanes, we include the complete proof for the reader’s convenience and to keep the paper self-contained as the entire proof is quite technical. Along the way, we indicate precisely the additional arguments required to obtain a Bollobás-type theorem, rather than merely the density of norm-attaining operators.

	The first lemma we need is almost identical as \cite[Lemma 1.1]{GMRV}. We present a proof for the sake of completeness.
	
	\begin{lemma} \label{lemma1} Let $K$ be a compact Hausdorff space and let $g \in C(K)$ be real-valued with $g \geq 0$. Then, 
		\begin{equation*}
			\nu \mapsto \int_K g d |\nu| 
		\end{equation*}
		is $w^*$-lower semicontinuous on $\mathcal{M}(K)$. Consequently, if $\mu: S \rightarrow \mathcal{M}(K)$ is $w^*$-continuous, then 
		\begin{equation*}
			s \in S \mapsto \int_K g d |\mu(s)| 
		\end{equation*}
		is lower semicontinuous with respect to the original topology of $S$. 
	\end{lemma}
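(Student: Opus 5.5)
The plan is to write $\int_K g\,d|\nu|$ as a supremum of $w^*$-continuous functions of $\nu$; a pointwise supremum of continuous functions is lower semicontinuous. Concretely, I would prove the duality formula
\begin{equation*}
	\int_K g\,d|\nu| = \sup\Big\{ \Big|\int_K \varphi\,d\nu\Big| \,:\, \varphi \in C(K),\ |\varphi| \leq g \text{ on } K \Big\}.
\end{equation*}
For each fixed $\varphi \in C(K)$ the map $\nu \mapsto \big|\int_K \varphi\,d\nu\big|$ is $w^*$-continuous, being the modulus of evaluation at an element of $C(K)$. Taking the supremum over $\varphi$ then gives the $w^*$-lower semicontinuity.

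The inequality ``$\geq$'' is immediate: $\big|\int \varphi\,d\nu\big| \leq \int |\varphi|\,d|\nu| \leq \int g\,d|\nu|$. For ``$\leq$'', write $d\nu = \theta\,d|\nu|$ with $|\theta| = 1$. By Lusin's theorem, since $|\nu|$ is a finite regular Borel measure, for $\delta > 0$ there is $\psi \in C(K)$ with $|\psi| \leq 1$ and $|\nu|(\{\psi \neq \overline{\theta}\}) < \delta$. Set $\varphi = g\psi$, which is continuous and satisfies $|\varphi| \leq g$. Then
\begin{equation*}
	\Big|\int g\,d|\nu| - \int \varphi\,d\nu\Big| = \Big|\int g(1 - \psi\theta)\,d|\nu|\Big| \leq 2\|g\|_\infty\,\delta.
\end{equation*}
Letting $\delta \to 0$ gives the reverse inequality.

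The main technical point is this Lusin step. One must make sure the continuous approximant of $\overline{\theta}$ can be taken with modulus at most $1$. This works by composing with the radial retraction of $\mathbb{C}$ onto the closed unit disc, which is continuous and fixes $\overline{\theta}$ wherever $|\theta| = 1$. Everything else is routine.

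For the consequence, let $s_\alpha \to s$ be a net in $S$. By $w^*$-continuity of $\mu$ we have $\mu(s_\alpha) \to \mu(s)$ in the $w^*$-topology. The first part then gives
\begin{equation*}
	\liminf_\alpha \int g\,d|\mu(s_\alpha)| \geq \int g\,d|\mu(s)|,
\end{equation*}
which is lower semicontinuity on $S$. Equivalently, the map is the composition of a continuous map with a lower semicontinuous one.
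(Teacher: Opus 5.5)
Your proof is correct, but it does not follow the paper's route for the first assertion. The paper gives no argument there: it simply invokes \cite[Lemma 1.1]{GMRV} for the $w^*$-lower semicontinuity of $\Phi(\nu)=\int_K g\,d|\nu|$. Its own work covers only the consequence, via $\{s\in S:\Phi(\mu(s))>\alpha\}=\mu^{-1}(\{\nu:\Phi(\nu)>\alpha\})$ being open, and your net/$\liminf$ argument for that part is equivalent.

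What you add is a self-contained proof of the cited fact. You write $\Phi(\nu)=\sup\{|\int_K\varphi\,d\nu|:\varphi\in C(K),\ |\varphi|\le g\}$, a supremum of $w^*$-continuous functions. The details all hold: the easy inequality uses $g\ge 0$, the error bound is $2\|g\|_\infty\delta$, Lusin's theorem applies because $|\nu|$ is regular, and the radial retraction keeps $|\psi|\le 1$.

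The paper's version is shorter but outsources the only nontrivial step. Yours serves the stated aim of that section to keep the paper self-contained. It also uses the same mechanism the paper employs later in Case 2 of Lemma \ref{lemma3}: polar decomposition, continuous approximation of $\overline{\theta}$, and the retraction $P$ onto the closed disc.

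In fact you could drop Lusin in favour of the density of $C(K)$ in $L_1(|\nu|)$, as is done there. If $\int_K|u-\overline{\theta}|\,d|\nu|<\delta$ and $q=P\circ u$, then $|q|\le 1$ and, since $1-q\theta=\theta(\overline{\theta}-q)$ a.e., $\bigl|\int_K g\,d|\nu|-\int_K gq\,d\nu\bigr|\le\|g\|_\infty\int_K|\overline{\theta}-q|\,d|\nu|<\|g\|_\infty\delta$.
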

	
	\begin{proof} Let $\Phi: \mathcal{M}(K) \rightarrow \R$ be defined by 
		\begin{equation*}
			\Phi(\nu):= \int_K g d |\nu|.
		\end{equation*}
		By \cite[Lemma 1.1]{GMRV}, we have that $\Phi$ is $w^*$-lower semicontinuous. We want to prove that $s \in S \mapsto \int_K g d |\mu(s)|$ is lower semicontinuous. This map is precisely $\Phi \circ \mu: S \rightarrow \R$. That is,
		\begin{equation*}
			(\Phi \circ \mu)(s) = \Phi(\mu(s)) = \int_K g d|\mu(s)| 
		\end{equation*}
		for every $s \in S$. Fix $\alpha \in \R$. To prove lower semicontinuity, it is enough to show that the set 
		\begin{equation*}
			\left\{ s \in S: \int_K g d |\mu(s)| > \alpha \right\} 
		\end{equation*}
		is open on $S$. Using the definition of $\Phi$, we have that 
		\begin{eqnarray*}
			\left\{ s \in S: \int_K g d |\mu(s)| > \alpha \right\} &=& \{ s \in S: \Phi(\mu(s)) > \alpha \} \\
			&=& \mu^{-1} \left( \{ \nu \in \mathcal{M}(K): \Phi(\nu) > \alpha \} \right).
		\end{eqnarray*}
		Since $\Phi$ is $w^*$-lower semicontinuous, the set $\{\nu \in \mathcal{M}(K): \Phi(\nu) > \alpha\}$ is $w^*$-open on $\mathcal{M}(K)$. Since $\mu: S \rightarrow \mathcal{M}(K)$ is $w^*$-continuous, its preimage its preimage is open in $S$. Hence, $s \mapsto \int_K gd|\mu(s)|$ is lower semicontinuous.
	\end{proof}
	
	The next lemma is the localization step in the proof. It is somewhat technical because, in addition to perturbing the representing map, we must preserve quantitative control of the prescribed function at which the original operator almost attains its norm. This additional control, together with the control of the representing map, is the ingredient needed to obtain a Bollobás theorem. The argument plays a role analogous to the combination of \cite[Lemmas~1.2 and~1.3]{GMRV}.
	
	\begin{lemma} \label{lemma2} Let $K$ and $S$ be compact Hausdorff spaces. Let $\mu: S \rightarrow \mathcal{M}(K)$ be $w^*$-continuous with $\|\mu\| = 1$. Let $f_0 \in S_{C(K)}$, let $s_0 \in S$ and suppose that 
		\begin{equation*}
			\re \int_K f_0 d\mu(s_0) > 1 - \alpha 
		\end{equation*}
		for some $\alpha > 0$. Let $0 < a < b < c < 1$, let $\tau > 0$ and set 
		\begin{equation*}
			m:= \frac{\alpha}{1 - c} + \tau \ \ \ \mbox{and} \ \ \ d:= (1 - a) + \alpha + m. 
		\end{equation*}
		Then, there exist $h \in S_{C(K)}$, a closed set $A \subseteq K$, a non-empty open set $U \subseteq S$ and a $w^*$-continuous map $\mu_1: S \rightarrow \mathcal{M}(K)$ such that 
		
		\begin{itemize}
			\item[(1)] $\|h - f_0\| \leq 1 - a$.
			\item[(2)] $|h(t)| = 1$ for every $ t \in K \setminus A$.
			\item[(3)] $K \setminus A \not= \emptyset$.
			\item[(4)] $|\mu_1(s)|(A) = 0$ for every $s \in U$.
			\item[(5)] $\|\mu_1\| \leq 1$. 
			\item[(6)] $\| \mu_1 - \mu\| < m$.
			\item[(7)] $\displaystyle \re \int_K h d \mu_1(s) > 1 - d$ for every $s \in U$.
		\end{itemize}
	\end{lemma}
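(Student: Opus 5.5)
The plan is to build $h$ by radially normalising $f_0$, to kill the mass of $\mu$ near the small values of $|f_0|$ with a Urysohn cut-off on $K$, and to localise that surgery in $S$ around $s_0$ with a second Urysohn function. The polar decomposition of $\mu(s_0)$ and Lemma \ref{lemma1} give the quantitative control.

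\emph{Choice of $h$ and $A$.} Put
\[
h:=\frac{f_0}{\max(|f_0|,a)}, \qquad A:=\{t\in K: |f_0(t)|\leq b\}.
\]
Then $h\in C(K)$ and $\|h\|\le 1$. Also $|h|=1$ on $\{|f_0|\ge a\}\supseteq K\setminus A$, which gives (2). Since $|f_0|$ attains the value $1>b$, the set $K\setminus A$ is non-empty, giving (3), and hence $\|h\|=1$. For (1):
\begin{itemize}
\item where $|f_0|\ge a$, we have $|h-f_0|=1-|f_0|\le 1-a$;
\item where $|f_0|<a$, we have $|h-f_0|=|f_0|(1/a-1)\le 1-a$.
\end{itemize}
Finally, $A$ is closed.

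\emph{Cut-off on $K$ and the estimate at $s_0$.} By Urysohn, take $g\in C(K,[0,1])$ with $g=0$ on $A$ and $g=1$ on the closed set $\{|f_0|\ge c\}$. Write $d\mu(s_0)=\theta\, d|\mu(s_0)|$ and let $D=\{t: \re(f_0\theta)(t)>c\}$, so that $D\subseteq\{|f_0|\ge c\}$. Set $x=|\mu(s_0)|(K\setminus D)$. Then
\[
1-\alpha<\re\int f_0\, d\mu(s_0)\le |\mu(s_0)|(D)+c\,x\le 1-(1-c)x,
\]
so $x<\alpha/(1-c)$. It follows that
\[
\int g\, d|\mu(s_0)|\ge |\mu(s_0)|(D)>1-\alpha-c x,
\]
hence
\[
1-\int g\, d|\mu(s_0)|<\alpha+\frac{c\,\alpha}{1-c}=\frac{\alpha}{1-c}<m.
\]
By Lemma \ref{lemma1}, the set $O:=\{s\in S:\int g\,d|\mu(s)|>1-m\}$ is open, and it contains $s_0$.

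\emph{Localisation on $S$.} Since $S$ is compact Hausdorff, hence normal, choose an open $V\ni s_0$ with $\overline V\subseteq O$, and $\psi\in C(S,[0,1])$ with $\psi=1$ on $\overline V$ and $\supp\psi\subseteq O$. Define
\[
\mu_1(s):=\bigl(1-\psi(s)+\psi(s)g\bigr)\mu(s).
\]
This map is $w^*$-continuous, because multiplication by a fixed continuous $g$ preserves $w^*$-continuity and $\psi$ is continuous.
\begin{itemize}
\item (5): the multiplier lies in $[0,1]$, so $\|\mu_1\|\le 1$.
\item (6): $\|\mu_1(s)-\mu(s)\|=\psi(s)\int(1-g)\,d|\mu(s)|$. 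This vanishes off $O$, and on $O$ it is at most $|\mu(s)|(K)-\int g\,d|\mu(s)|\le 1-\int g\,d|\mu(s)|<m$. Thus $\|\mu_1(s)-\mu(s)\|<m$ for every $s$. To get the strict inequality for the supremum, I would make the same argument with $O$ replaced by $\{s:\int g\,d|\mu(s)|>1-m'\}$ for some $\alpha/(1-c)<m'<m$. This set is still open and contains $s_0$, and compactness is not even needed.
\item (4): on $V$ we have $\mu_1=g\mu$, so $|\mu_1(s)|(A)=\int_A g\,d|\mu(s)|=0$.
\end{itemize}

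\emph{Choice of $U$ and item (7).} Let
\[
U:=V\cap\Bigl\{s:\re\int h\,d\mu_1(s)>1-d\Bigr\},
\]
which is open since $s\mapsto\int h\,d\mu_1(s)$ is continuous. It contains $s_0$, because
\[
\re\int hg\,d\mu(s_0)\ge\re\int f_0\,d\mu(s_0)-\|h-f_0\|-\int(1-g)\,d|\mu(s_0)|>1-\alpha-(1-a)-m=1-d.
\]
So $U$ is non-empty and (7) holds on $U$.

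The step I expect to be the main obstacle is (6). The difference $\int(1-g)\,d|\mu(s)|$ is not continuous in $s$, and there is no global control of it away from $s_0$. This is overcome by the inequality $\int(1-g)\,d|\mu(s)|\le 1-\int g\,d|\mu(s)|$, which turns the needed upper bound into a lower bound on $\int g\,d|\mu(s)|$; that lower bound defines an open set by the lower semicontinuity in Lemma \ref{lemma1}. The sharp estimate at $s_0$ via the set $D$ is what makes the constant $\alpha/(1-c)$, and not something larger, suffice.
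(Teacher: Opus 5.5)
Your proof is correct and is essentially the paper's argument. You use the same Urysohn cut-off on $K$ (your $g$ is the paper's $1-G$), the same use of Lemma \ref{lemma1} to get an open set around $s_0$, and the same localized multiplier $1-\psi(s)(1-g)$ defining $\mu_1$.

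The differences are cosmetic. Your closed formula $h=f_0/\max(|f_0|,a)$ replaces the paper's Urysohn interpolation between $f_0$ and $f_0/|f_0|$. You obtain (7) on $U$ by continuity from $s_0$ instead of by a uniform estimate on a neighbourhood of $s_0$. The set $D$ is not needed: splitting $\int_K|f_0|\,d|\mu(s_0)|$ over $\{|f_0|>c\}$ already yields the same constant $\alpha/(1-c)$.
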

	
	\begin{proof} Let us define 
		\begin{equation*}
			\mathcal{F}_b:= \{t \in K: |f_0(t)| \geq b \} \ \ \ \mbox{and} \ \ \ \mathcal{O}_a:=\{t \in K: |f_0(t)| > a\}.
		\end{equation*}
		Then, $\mathcal{F}_b$ is closed, $\mathcal{O}_a$ is open and $\mathcal{F}_b \subseteq \mathcal{O}_a$ since $a<b$. Since compact Hausdorff spaces are normal and, in particular, regular, there is an open set $\mathcal{V}$ such that $\mathcal{F}_b \subseteq \mathcal{V} \subseteq \overline{\mathcal{V}} \subseteq \mathcal{O}_a$. By Urysohn's lemma, applied to the disjoint closed sets $\mathcal{F}_b$ and $K \setminus \mathcal{V}$, there exists a real-valued function $u \in C(K, [0,1])$ such that $u \equiv1$ on $\mathcal{F}_b$ and $u \equiv 0$ on $K \setminus \mathcal{V}$. This implies that $\supp u \subseteq \overline{\mathcal{V}} \subseteq \mathcal{O}_a$.

		On $\mathcal{O}_a$, let us define $v:= \frac{f_0}{|f_0|}$. This function is continuous on $\mathcal{O}_a$. Since $\supp u \subseteq \mathcal{O}_a$, the product function $uv$, defined as 
		\begin{equation*}
			(uv)(t):=
			\begin{cases}
				u(t)v(t), & \text{if } t\in\mathcal{O}_a,\\
				0, & \text{if } t\in K\setminus\mathcal{O}_a,
			\end{cases}
		\end{equation*}
		is continuous on $K$. Now, let us define 
		\begin{equation*}
			h(t):=
			\begin{cases}
				f_0(t) + u(t) (v(t) - f_0(t)), & \text{if } t\in\mathcal{O}_a,\\
				f_0(t), & \text{if } t\in K\setminus\mathcal{O}_a.
			\end{cases}
		\end{equation*}
		Notice that $h$ is continuous on $K$. Also, since we can write $h(t) = (1 - u(t))f_0(t) + u(t) v(t)$ when $t \in \mathcal{O}_a$, $0 \leq u(t) \leq 1$, $|f_0(t)| \leq 1$ and $|v(t)| = 1$ for every $t \in \mathcal{O}_a$, we have that $\|h\| \leq 1$. Since $f_0 \in S_{C(K)}$, there exists $\tilde{t} \in K$ such that $|f_0(\tilde{t})| = 1$. Then, $\tilde{t} \in \mathcal{F}_b$ and so $u(\tilde{t})=1$, and $|h(\tilde{t})|=|v(\tilde{t})|=1$. In particular, $h \in S_{C(K)}$.

		Now, let 
		\begin{equation*} 
			A:= \{t \in K: |f_0(t)| \leq b\}. 
		\end{equation*} 
		Then, $A$ is closed and $K \setminus A$ is nonempty as $|f_0(\tilde{t})|=1 > b$. If $t \in K \setminus A$, then $|f_0(t)| > b$, so $t \in \mathcal{F}_b$, $u(t) = 1$ and $h(t) = v(t)$, that is, $|h(t)| = 1$. Moreover, for every $t \in \mathcal{O}_a$ (recall that outside $\mathcal{O}_a$, one has $h = f_0$), 
		\begin{eqnarray*}
			|h(t) - f_0(t)| &=& u(t) |v(t) - f_0(t)| \\
			&=& u(t) \left| \frac{f_0(t)}{|f_0(t)|} - f_0(t) \right| 
			= u(t)(1 -|f_0(t)|) \leq 1 - a
		\end{eqnarray*}
		as $u(t) > 0$ can occur only on $\mathcal{O}_a$, where $|f_0(t)| > a$. This proves items (1), (2) and (3).

		Next, let us choose a continuous function $G \in C(K, [0,1])$ such that $G \equiv 1$ on $A$ and $G \equiv 0$ on the set $\{t \in K: |f_0(t)| \geq c \}$ (this is once again Urysohn's lemma, since $A = \{t \in K: |f_0(t)| \leq b\}$ and $\{t \in K: |f_0(t)| \geq c\}$ are disjoint closed sets). Let $\mu: S \rightarrow \mathcal{M}(K)$ be $w^*$-continuous with $\|\mu\| = 1$. By assumption, we have that 
		\begin{equation*}
			\re \int_K f_0 d \mu(s_0) > 1 - \alpha 
		\end{equation*}
		for some $\alpha > 0$. Let us put $\nu_0:=\mu(s_0)$, a complex regular Borel measure on $K$. Since $\|\mu\| = 1$, we get that $\|\nu_0\| \leq 1$. Also, by above, we also get that 
		\begin{equation} \label{eq1}
			1 - \alpha < \re \int_K f_0 d \nu_0 \leq \int_K |f_0| d |\nu_0|.
		\end{equation}

		Let us write $\mathcal{W}_c:= \{t \in K: |f_0(t)| > c\}$. As $|f_0(t)| \leq c$ for every $t \in K \setminus \mathcal{W}_c$, $f_0 \in C(K)$ with $\|f_0\| = 1$ and since $|\nu_0|$ is the total variation of the complex measure $\nu_0$, it is a positive measure, and we can split the integral over $\mathcal{W}_c$ and $K \setminus \mathcal{W}_c$ as follows 
		\begin{eqnarray*}
			\int_K |f_0| d|\nu_0| &=& \int_{\mathcal{W}_c} |f_0| d |\nu_0| + \int_{K \setminus \mathcal{W}_c} |f_0| d |\nu_0| \\
			&\leq& |\nu_0|(\mathcal{W}_c) + c |\nu_0|(K \setminus \mathcal{W}_c) \\
			&=& |\nu_0|(\mathcal{W}_c) + c[|\nu_0|(K) - |\nu_0|(\mathcal{W}_c)] \\
			&=& c |\nu_0|(K) + (1 - c) |\nu_0|(\mathcal{W}_c) \\
			&=& c \|\nu_0\| + (1 - c) |\nu_0|(\mathcal{W}_c) \\
			&\leq& c + (1 - c)|\nu_0|(\mathcal{W}_c).
		\end{eqnarray*}
		Together with (\ref{eq1}), we obtain 
		\begin{equation*}
			|\nu_0|(\mathcal{W}_c) > 1 - \frac{\alpha}{1-c}.
		\end{equation*}
		Since $G \equiv 0$ on $\mathcal{W}_c$, we have that $1 - G \equiv 1$ on $\mathcal{W}_c$. Also, since $0 \leq G \leq 1$, $1-G \geq 0$ on $K$. Thus,
		\begin{eqnarray*}
			\int_K (1 - G)d|\nu_0| &=& \int_{\mathcal{W}_c} (1 -G)d|\nu_0| + \int_{K \setminus \mathcal{W}_c} (1 - G) d |\nu_0| \\
			&=& |\nu_0|(\mathcal{W}_c) + \int_{K \setminus \mathcal{W}_c} (1 - G)d|\nu_0| 
			\geq |\nu_0|(\mathcal{W}_c) 
			> 1 - \frac{\alpha}{1-c}.
		\end{eqnarray*}
		On the other hand, since $1- G \in C(K)$ satisfies $1 - G \geq 0$ on $K$ and $\mu: S \rightarrow \mathcal{M}(K)$ is $w^*$-continuous, it follows, from the second part of Lemma \ref{lemma1}, that $s \mapsto \int_K (1 - G)d|\mu(s)|$ is lower semicontinuous with respect to the original topology of $S$. From the previous inequality, there is an open neighbourhood $U_0$ of $s_0$ such that
		\begin{equation} \label{eq2}
			\int_K (1 - G)d |\mu(s)| > 1 - \frac{\alpha}{1-c} - \frac{\tau}{2}, \forall s \in U_0.
		\end{equation}
		Notice that since $\mu: S \rightarrow \mathcal{M}(K)$ is $w^*$-continuous, the scalar map $s \in S \mapsto \int_K f_0 d\mu(s)$ is continuous. By assumption, $\re \int_K f_0 d\mu(s_0) > 1 -\alpha$. This means that the set 
		\begin{equation*}
			\left\{ s \in S: \re \int_K f_0 d \mu(s) > 1 - \alpha \right\} 
		\end{equation*}
		is an open neighborhood of $s_0$ in $S$. We can then shrink $U_0$ if necessary such that 
		\begin{equation} \label{eq3}
			\re \int_K f_0 d \mu(s) > 1 - \alpha, \forall s \in U_0.
		\end{equation}
		From (\ref{eq2}) and the fact that $\|\mu(s)\| \leq 1$ for every $s \in U_0$, we obtain also that 
		\begin{equation} \label{eq4}
			\int_K G d |\mu(s)| < \frac{\alpha}{1 - c} + \frac{\tau}{2} < m, \forall s \in U_0 
		\end{equation}
		recalling the definition of $m$ in the statement of the present lemma.

		Now, let us choose a nonempty open neighbourhood $U$ with $s_0 \in U$ and $\overline{U} \subseteq U_0$ (recall that since $S$ is compact Hausdorff, it is normal and so it is regular). Once again by Urysohn's lemma on $S$, there exists $\psi \in C(S, [0,1])$ such that $\psi \equiv 1$ on $\overline{U}$ and $\supp \psi \subseteq U_0$. For every $s \in S$, we define 
		\begin{equation*}
			d \mu_1(s) := (1 - \psi(s)G)d \mu(s).
		\end{equation*}
		Then, for every $\varphi \in C(K)$, we have that 
		\begin{equation*}
			\int_K \varphi d\mu_1(s) = \int_K \varphi(1 - \psi(s)G(t)) d \mu(s)(t) 
		\end{equation*}
		for every fixed $s \in S$. To prove that $\mu_1: S \rightarrow \mathcal{M}(K)$ is $w^*$-continuous, we must prove that, for every $\varphi \in C(K)$, the scalar map $s \mapsto \int_K \varphi d\mu_1(s)$ is continuous. We have that 
		\begin{eqnarray*}
			\int_K \varphi d \mu_1(s) &=& \int_K \varphi(1 - \psi(s)G) d \mu(s) \\
			&=& \int_K \varphi d\mu(s) - \psi(s) \int_K \varphi G d \mu(s). 
		\end{eqnarray*}
		for a fixed $s \in S$. Now, $\varphi, G \in C(K)$ and so $\varphi G \in C(K)$. Since $\mu:S \rightarrow \mathcal{M}(K)$ is $w^*$-continuous, both maps $s \mapsto \int_K \varphi d \mu(s)$ and $s\mapsto \int_K \varphi G d \mu(s)$ are continuous. Also, $s \mapsto \psi(s)$ is continuous since $\psi \in C(K)$. Therefore, $s \mapsto \int_K \varphi d \mu_1(s)$ is continuous for every $\varphi \in C(K)$.

		Notice also that, since $0 \leq 1 - \psi(s) G \leq 1$ for every $s \in S$, we have that $\|\mu_1\| \leq 1$. If $s \in U \subseteq S$, then $\psi(s) = 1$ and $G \equiv 1$ on $A \subseteq K$. So, for every $s \in U$, we have that 
		\begin{equation*}
			|\mu_1(s)|(A) = \int_A (1 - \psi(s) G)d |\mu(s)| = 0.
		\end{equation*}
		In other words, this means that $\mu_1(s)$ gives no mass to $A$ in total variation for every $s \in U$. This yields items (4) and (5).

		Now, let us notice that if $s \not\in \supp \psi$, then $\mu_1(s) = \mu(s)$. On the other hand, if $s \in \supp \psi$, then $s \in U_0$, we get that 
		\begin{equation*}
			\|\mu_1(s) - \mu(s)\| = \int_K \psi(s) G d |\mu(s)| \leq \int_K G d |\mu(s)| \stackrel{(\ref{eq4})}{<} m.
		\end{equation*}
		Thus, $\|\mu_1 - \mu\| < m$ and this gives item (6).

		Finally, for $s \in U$, we have that $d \mu_1(s) = (1 - G) d \mu(s)$. Using that $\|\mu(s)\| \leq 1$ and that $\|h\| \leq 1$, we obtain, for every $s \in U$, 
		\begin{eqnarray*}
			\re \int_K h d \mu_1(s) &=& \re \int_K h (1 - G) d \mu(s) \\
			&\geq& \re \int_K f_0 d \mu(s) - \int_K |f_0 - h(1 - G)| d |\mu(s)| \\
			&\stackrel{(\ref{eq3})}{>}& 1 - \alpha - \int_K (|f_0 - h| + G|h|) d |\mu(s)| \\
			&\stackrel{(\ref{eq4})}{>}& 1- \alpha -(1-a) - m = 1 -d
		\end{eqnarray*}
		recalling how $d$ is defined in the statement above. This gives item (7).
	\end{proof}
	
	The next lemma is the main iterative step in the proof. Its purpose is to modify the representing map while keeping the function $h$ fixed and preserving the condition that the measures vanish on $A$. At the
	same time, the new map gives a better estimate for the integral of $h$. This is similar in spirit to the iterative argument used in \cite[Lemma~1.3]{GMRV}. However, here the function $h$ cannot be replaced during the construction, since it was chosen in Lemma \ref{lemma2} to remain close to the original function $f_0$.
	
	\begin{lemma} \label{lemma3} Let $K$ and $S$ be compact Hausdorff spaces. Let $\mu: S \rightarrow \mathcal{M}(K)$ be $w^*$-continuous. Let $U \subseteq S$ be nonempty and open. Let $A \subseteq K$ be closed with $K \setminus A \not= \emptyset$ and let $h \in C(K)$ satisfy $\|h\| = 1$ and $|h(t)| = 1$ for every $t\in K \setminus A$. Assume that $|\mu(s)|(A) = 0$ for every $s \in U$ and, for some $\delta > 0$, we also assume that 
		\begin{equation*}
			\re \int_K h d \mu(s) > \| \mu \| - \delta 
		\end{equation*}
		for every $s \in U$. Fix $1/2 < r < 1$. Then, there are a $w^*$-continuous map $\mu': S \rightarrow \mathcal{M}(K)$ and a nonempty open set $U' \subseteq U$ with the following properties. 
		\begin{itemize}
			\item[(1)] $|\mu'(s)|(A) = 0$ for every $s \in U'$.
			\item[(2)] $\|\mu'\| \leq \|\mu\|$.
			\item[(3)] $\displaystyle \re \int_K h d \mu'(s) > \|\mu'\| - r \delta$ for every $s \in U'$.
			\item[(4)] $\|\mu' - \mu\| \leq \sqrt{2 \|\mu\| \delta} + 2 \delta$.
		\end{itemize}
	\end{lemma}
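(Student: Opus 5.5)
The plan follows the one-point perturbation scheme of \cite[Lemma~1.3]{GMRV}, with $h$ kept fixed throughout. Fix $s_1\in U$ and set $\nu:=\mu(s_1)$, with polar decomposition $d\nu=\theta\,d|\nu|$. Since $|\nu|(A)=0$ and $|h|=1$ on $K\setminus A$, we have $|h\theta|=1$ $|\nu|$-a.e., and hence
\begin{equation*}
\int_K |1-h\theta|^2\,d|\nu| = 2\int_K \bigl(1-\re(h\theta)\bigr)\,d|\nu| \leq 2\bigl(\|\mu\|-\re\textstyle\int_K h\,d\nu\bigr) < 2\delta .
\end{equation*}
Let $\lambda$ be the measure with $d\lambda:=\overline{h}\,d|\nu|$. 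By Cauchy--Schwarz,
\begin{equation*}
\|\lambda-\nu\|=\int_K|\overline{h}-\theta|\,d|\nu| \leq \sqrt{\|\nu\|\cdot 2\delta}\leq\sqrt{2\|\mu\|\delta}.
\end{equation*}
Moreover $\lambda$ is "perfectly aligned" with $h$: we have $\re\int_K h\,d\lambda=\|\lambda\|=\|\nu\|$, and $|\lambda|(A)=0$. The point is that $\lambda$ is one fixed measure, so adding a continuous scalar multiple of $\lambda-\nu$ keeps the representing map $w^*$-continuous.

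Next choose, by regularity and Urysohn's lemma on $S$, a nonempty open set $U''$ with $s_1\in U''$ and $\overline{U''}\subseteq U$, and a function $\psi\in C(S,[0,1])$ with $\psi\equiv 1$ near $s_1$ and $\supp\psi\subseteq U''$. Define the provisional map
\begin{equation*}
\tilde\mu(s):=\mu(s)+\psi(s)(\lambda-\nu).
\end{equation*}
This map is $w^*$-continuous and satisfies $\|\tilde\mu-\mu\|\leq\sqrt{2\|\mu\|\delta}$. It also vanishes in total variation on $A$ wherever $\psi\neq 0$, because both $\mu(s)$ (for $s\in U$) and $\lambda$ do so. At $s_1$ we get $\tilde\mu(s_1)=\lambda$, which attains its norm at $h$.

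To restore item (2), I would rescale by $\mu':=\tilde\mu\,\|\mu\|/\max(\|\tilde\mu\|,\|\mu\|)$, or else replace the additive perturbation by a convex one on a smaller set. The cost of this normalisation, together with the gap $\|\mu\|-\|\nu\|<\delta$, should be what produces the extra term $2\delta$ in item (4).

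Then $U'$ is taken to be a neighbourhood of $s_1$ inside $\{\psi=1\}$. On it, $w^*$-continuity of $s\mapsto\re\int_K h\,d\mu'(s)$ keeps the value close to $\re\int_K h\,d\mu'(s_1)$, which is essentially $\|\nu\|$. For item (1) I would use Lemma \ref{lemma1}: together with $|\mu(s)|(A)=0$ on $U$, it lets me control $|\mu'(s)|(A)$ via $1-G$-type test functions, and if necessary I would multiply by a cut-off exactly as in Lemma \ref{lemma2}.

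The main obstacle is item (3). The hypothesis only gives $\re\int_K h\,d\mu(s)>\|\mu\|-\delta$, so a priori $\|\nu\|$ may sit a full $\delta$ below $\|\mu'\|$, whereas we need a gap of at most $r\delta$ with $r>1/2$. I would first try to choose $s_1$ so that $\re\int_K h\,d\mu(s_1)$ is within $(r-\tfrac12)\delta$ of $\sup_{s\in U}\re\int_K h\,d\mu(s)$. Then, following \cite[Lemma~1.3]{GMRV}, I would shrink the contribution of $\mu(s)$ away from $s_1$: for instance, replace $\mu(s)$ on $\supp\psi$ by $(1-\psi(s))\mu(s)+\psi(s)\lambda$, and cap $\|\mu'\|$ by roughly $\|\nu\|+\tfrac{\delta}{2}$ via a multiplicative factor on $S\setminus U'$. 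This should make $\|\mu'\|-\re\int_K h\,d\mu'(s)$ at most about $\tfrac{\delta}{2}+o(1)<r\delta$ on $U'$. Checking that this capping keeps $w^*$-continuity and costs at most $2\delta$ in norm is the step I expect to require the most care.
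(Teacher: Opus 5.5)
There is a genuine gap, and it is in the construction itself, not only in item (3). You perturb toward the \emph{fixed} measure $\lambda$, and no version of this gives (2) and (4) together. Since $\mu(s)\to\nu$ only weak$^*$ as $s\to s_1$, the two variants fail as follows. The additive map $\mu(s)+\psi(s)(\lambda-\nu)$ can have norm $\|\mu(s)\|+\|\lambda-\nu\|$ at points $s\neq s_1$ near $s_1$. The convex map $(1-\psi(s))\mu(s)+\psi(s)\lambda$ has $\|\mu'(s)-\mu(s)\|=\psi(s)\|\lambda-\mu(s)\|$, which need not be small.

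A concrete example: take $K=S=[0,1]$, $\mu(s)=\delta_s$, $A=\emptyset$, $U=S$ and $h\equiv e^{i\phi}$ with $\cos\phi=1-\delta/2$. Then $\lambda=e^{-i\phi}\delta_{s_1}$ and $\|\lambda-\nu\|=\sqrt{\delta}$. The additive map has norm $1+\sqrt{\delta}$. After your renormalisation, the gap in (3) is $\sqrt{\delta}/(1+\sqrt{\delta})>r\delta$, and $\|\mu'-\mu\|=2\sqrt{\delta}/(1+\sqrt{\delta})>\sqrt{2\delta}+2\delta$ for small $\delta$. The convex map gives $\|\mu'-\mu\|=2$, and no scalar capping repairs this.

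The missing idea is to do the alignment \emph{multiplicatively} with a continuous density. Choose $q\in C(K)$ with $\|q\|\leq 1$ and $\int_K|q-\overline{h\theta}|\,d|\nu|<\gamma$; your $\lambda$ is exactly the limit of $q\,d\nu$. Then put $d\mu'(s)=\bigl(1-\psi(s)+\psi(s)q\bigr)\,d\mu(s)$. Now $w^*$-continuity and $\|\mu'\|\leq\|\mu\|$ are automatic. The distance $\|\mu'(s)-\mu(s)\|\leq\int_K|q-1|\,d|\mu(s)|$ is controlled near $s_1$ by Lemma~\ref{lemma1} applied to $1-\varphi$, with $\varphi=|q-1|/2$. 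Together with $\|\mu(s)\|\leq\|\mu\|$, this gives $\int_K\varphi\,d|\mu(s)|<\|\mu\|-\|\mu(s_1)\|+\int_K\varphi\,d|\nu|+\eta$ near $s_1$. This is where Lemma~\ref{lemma1} is needed. Item (1) needs no such argument: it is trivial on $U$, since $\mu(s)$ and $q\,d\mu(s)$ (or $\lambda$) already vanish on $A$.

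This estimate also shows why your handling of (3) cannot work as stated. After alignment the value at $s_1$ is about $\|\mu(s_1)\|$, so what matters is $\|\mu\|-\|\mu(s_1)\|$, not how close $\re\int_K h\,d\mu(s_1)$ is to its supremum over $U$. Moreover, $s\mapsto\|\mu(s)\|$ is only lower semicontinuous. Points arbitrarily close to $s_1$ may have norm $\|\mu\|$ while $\|\nu\|$ is almost $\delta$ smaller. So no continuous factor equal to $1$ on $U'$ can cap $\|\mu'\|$ at $\|\nu\|+\delta/2$.

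The paper resolves this with a dichotomy, using $a_0=1-r$.
\begin{itemize}
\item[(a)] If some $s_1\in U$ has $\|\mu(s_1)\|>\|\mu\|-a_0\delta$, one aligns there. Item (3) then holds with gap $<a_0\delta+\gamma<r\delta$, provided $\gamma<(2r-1)\delta$; this is where $r>1/2$ is used. The estimate above gives $\int_K|q-1|\,d|\mu(s)|<\sqrt{2\|\mu\|\delta}+2a_0\delta+\gamma+2\eta<\sqrt{2\|\mu\|\delta}+2\delta$. This is the actual source of the $2\delta$ in (4), rather than any renormalisation cost.
\item[(b)] Otherwise $\sup_{s\in U}\|\mu(s)\|\leq\|\mu\|-a_0\delta$, so there is room to add $a_0\delta\,\psi(s)\overline{h(t_0)}\delta_{t_0}$, with $t_0\in K\setminus A$ and $\supp\psi\subseteq U$. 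This raises $\re\int_K h\,d\mu$ by $a_0\delta$ at a point of $U$ without increasing $\|\mu\|$.
\end{itemize}
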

	
	\begin{proof} If $\|\mu\| = 0$, then we take $\mu' = \mu$ and $U' = U$. Now, assume that $\|\mu\| >0$ and let us write $a_0:= 1 - r$. So, we have that $0 < a_0 < 1/2$. We distinguish two cases.
		
		\vspace{0.2cm}
		\noindent
		{\it Case 1}: Assume first that 
		\begin{equation*}
			\sup_{s \in U} \|\mu(s)\| \leq \|\mu\| - a_0 \delta.
		\end{equation*}
		Let $s_0 \in U$ and $t_0 \in K \setminus A$. By assumption, $|h(t_0)| = 1$. So, denoting by $\delta_{t_0}$ the Dirac measure on $K$ at $t_0$, we have that 
		\begin{equation*}
			\int_K h d(\overline{h(t_0)} \delta_{t_0}) = \overline{h(t_0)} h(t_0) = |h(t_0)|^2 =1.
		\end{equation*}
		Choose $\psi \in C(S, [0,1])$ with $\psi(s_0) = 1$ and $\supp \psi \subseteq U$. Let us define 
		\begin{equation*}
			\mu'(s) := \mu(s) + a_0 \delta \psi(s) \overline{h(t_0)} \delta_{t_0} 
		\end{equation*}
		for every $s \in S$. Notice that this map is $w^*$-continuous as, for every $\varphi \in C(K)$, we have that 
		\begin{equation*}
			\int_K \varphi d \mu'(s) = \int_K \varphi d \mu(s) + a_0 \delta \psi(s) \overline{h(t_0)} \varphi(t_0)
		\end{equation*}
		for a fixed $s \in S$. The first term is continuous since $\mu: S \rightarrow \mathcal{M}(K)$ is $w^*$-continuous and the second is also continuous since $\psi \in C(S)$. Hence $s \mapsto \int_K \varphi d \mu'(s)$ is continuous for every $\varphi \in C(K)$ and so $\mu'$ is $w^*$-continuous.

		Also, if $s \not\in U$, then $\mu'(s) = \mu(s)$ and if $s \in U$, then 
		\begin{equation*}
			\|\mu'(s)\| \leq \|\mu(s)\| + \|a_0 \delta \psi(s) \overline{h(t_0)} \delta_{t_0}\|.
		\end{equation*}
		Since $t_0 \in K \setminus A$, we have that $|\overline{h(t_0)}| = |h(t_0)| = 1$. Also, $\|\delta_{t_0}\| = 1$. Since $0 \leq \psi \leq 1$, we get that $\|\mu'(s)\| \leq \|\mu(s)\| + a_0 \delta$ for every $s \in U$. Since we are assuming $\sup_{s \in U} \|\mu(s)\| \leq \|\mu\| - a_0 \delta$, we get that 
		\begin{equation*}
			\|\mu'(s)\| \leq \|\mu\| - a_0 \delta + a_0 \delta = \|\mu\| 
		\end{equation*}
		for every $s \in U$. This shows that $\|\mu'\| = \sup_{s \in S} \|\mu'(s)\| \leq \|\mu\|$.

		Now, by assumption $|\mu(s)|(A) = 0$ for every $s \in U$. Since $t_0 \in K \setminus A$, we get that $|\mu'(s)|(A) = 0$ for every $s \in U$. Notice also that at $s_0 \in S$ we have 
		\begin{eqnarray*}
			\re \int_K h d \mu'(s_0) &=& \re \int_K h d \mu(s_0) + a_0 \delta \\
			&>& \|\mu\| - \delta + a_0 \delta \\
			&=& \|\mu\| - r \delta \\
			&\geq& \|\mu'\| - r\delta
		\end{eqnarray*}
		where the strict inequality is by assumption. By scalar continuity, the same strict inequality holds on some neighborhood $U' \subseteq U$ of $s_0$. Finally, 
		\begin{equation*}
			\|\mu' - \mu\| \leq a_0 \delta \leq \sqrt{2\|\mu\| \delta} + 2 \delta.
		\end{equation*}
		This proves items (1), (2), (3) and (4) in the first case. Now we consider the second.

		\vspace{0.2cm}
		\noindent
		{\it Case 2}: Let us assume now that 
		\begin{equation*}
			\sup_{s \in U} \|\mu(s)\| > \|\mu\| - a_0 \delta.
		\end{equation*}
		We can pick then $s_1 \in U$ to be such that 
		\begin{equation} \label{eq5}
			\|\mu(s_1)\| > \|\mu\| - a_0 \delta.
		\end{equation}
		Since $\mu(s_1) \in \mathcal{M}(K)$ is a complex regular Borel measure on $K$, we consider its total variation measure. Define $\sigma:= |\mu(s_1)|$. So, $\sigma$ is a positive finite regular Borel measure on $K$ and its total mass is $\sigma(K) = |\mu(s_1)|(K) = \|\mu(s_1)\|$. Now, the complex measure $\mu(s_1)$ is absolutely continuous with respect to $\sigma$ since if $\sigma(E) = |\mu(s_1)|(E) = 0$, then $\mu(s_1)(E) = 0$. Therefore, by the Radon-Nikodým theorem, there exists a $\sigma$-measurable function $\theta: K \rightarrow \C$ such that $d \mu(s_1) = \theta d \sigma$ with $|\theta|=1$ $\sigma$-almost everywhere.

		Now, since $s_1 \in U$, by assumption, $|\mu(s_1)|(A) = 0$ and so $\sigma(A) = 0$. Since $|h|=1$ on $K \setminus A$, the function $\xi = h \theta$ has complex modulus one $\sigma$-almost everywhere. The hypothesis implies, since $\|\mu(s_1)\| \leq \|\mu\|$, 
		\begin{equation} \label{eq6}
			\|\mu(s_1)\| - \re \int_K h d \mu(s_1) < \delta.
		\end{equation}
		Now, choose $\gamma > 0$ to be such that 
		\begin{equation} \label{eq7}
			\gamma < (2r - 1) \delta.
		\end{equation}
		Since $C(K)$ is dense in $L_1(\sigma)$, there exists $u \in C(K)$ such that 
		\begin{equation*}
			\int_K |u - \overline{\xi}|d \sigma < \gamma.
		\end{equation*}
		Define $P: \C \rightarrow \overline{\D}$ by 
		\begin{equation*}
			P(z):=
			\begin{cases}
				z, & \text{if } |z| \leq 1,\\
				z/|z|, & \text{if } |z| > 1.
			\end{cases}
		\end{equation*}
		The reason why we introduce the function $P$ is the following. The function $u \in C(K)$ above might not satisfy $\|u\| \leq 1$. So, we modify $u$ by projecting its values into the closed unit disk $\D$. That is, we consider the function $q:= P \circ u \in C(K)$. Then, $\|q\| \leq 1$. Now, since $\overline{\xi} \in \overline{\D}$ $\sigma$-almost everywhere, we have that 
		\begin{equation*}
			|P(u(t))-\overline{\xi(t)}| \leq |u(t)-\overline{\xi(t)}|.
		\end{equation*}
		for every $t \in K$ $\sigma$-almost everywhere. Thus,
		\begin{equation} \label{eq8}
			\int_K |q - \overline{\xi}| d \sigma \leq \int_K |u - \overline{\xi}| d \sigma < \gamma.
		\end{equation}
		Once again using that $|\xi| = 1$ $\sigma$-almost everywhere, we also get 
		\begin{equation} \label{eq9}
			\int_K |q \xi - 1| d \sigma = \int_K |(q - \overline{\xi}) \xi| d \sigma < \gamma
		\end{equation}
		by using (\ref{eq8}). Therefore, since $\sigma$ is a positive measure,
		\begin{eqnarray*}
			\re \int_K h q d \mu(s_1) &=& \int_K \re (qh \theta) d \sigma \\
			&=& \int_K \re (q \xi) d \sigma \\
			&\geq& \int_K1 d \sigma - \int_K |q - \overline{\xi}| d \sigma \\
			&=& |\mu(s_1)|(K) - \int_K |q - \overline{\xi}|d \sigma \\
			&=& \|\mu(s_1)\| - \int_K |q - \overline{\xi}|d \sigma \\
			&\stackrel{(\ref{eq9})}{>}& \|\mu(s_1)\| - \gamma \\ 
			&\stackrel{(\ref{eq5})}{>}& \|\mu\| - a_0 \delta - \gamma  \\
			&=& \|\mu\| - (1-r) \delta - \gamma \\
			&\stackrel{(\ref{eq7})}{>}& \|\mu\| - (1-r) \delta - (2r - 1) \delta \\
			&=& \|\mu\| - \delta + r \delta - 2r \delta + \delta \\
			&=& \|\mu\| - r \delta. \hspace{4.5cm} (\#_1)
		\end{eqnarray*}
		We need now to estimate $\int_K |q-1|d\sigma$. Since $|\xi| = 1$ $\sigma$-almost everywhere, we also have that $|\xi-1|^2=2(1-\re \xi)$ $\sigma$-almost everywhere. Using the Cauchy-Schwarz inequality in $L_2(\sigma)$, we get 
		\begin{eqnarray*}
			\int_K |\xi - 1| d \sigma &\leq& \left( \int_K |\xi - 1|^2 d \sigma \right)^{1/2} \sigma(K)^{1/2} \\
			&=& \left(2 \int_K (1 - \re \xi) d \sigma \right)^{1/2} \|\mu(s_1)\|^{1/2} \\
			&=& \left(2 \left[ \|\mu(s_1)\| - \re \int_K h d \mu (s_1) \right] \right)^{1/2} \|\mu(s_1)\|^{1/2} \\
			&\stackrel{(\ref{eq6})}{<}& \sqrt{2\|\mu\| \delta}.
		\end{eqnarray*}
		Putting this together with (\ref{eq8}) gives us the following inequality 
		\begin{equation} \label{eq10}
			\int_K |q-1|d \sigma \leq \int_K |q - \overline{\xi}|d\sigma + \int_K |\overline{\xi}-1| d \sigma \stackrel{(\ref{eq8})}{<} \gamma + \sqrt{2\|\mu\|\delta}.
		\end{equation}

		Let us set 
		\begin{equation*}
			\varphi:= \frac{|q-1|}{2} \in C(K).
		\end{equation*}
		Then, $0 \leq \varphi \leq 1$ and by Lemma \ref{lemma1} the map $s \mapsto \int_K(1-\varphi)d|\mu(s)|$ is lower semicontinuous since $\mu: S \rightarrow \mathcal{M}(K)$ is $w^*$-continuous. Let us choose $\eta > 0$ so small that 
		\begin{equation} \label{eq11}
			2 a_0 \delta + \gamma + 2 \eta < 2 \delta
		\end{equation}
		(which is possible provided we choose $\gamma$ small enough as before). There exists an open neighborhood $U_0 \subseteq U$ of $s_1$ such that, for every $s \in U_0$, we have 
		\begin{equation*}
			\int_K (1 - \varphi) d |\mu(s)| > \int_K (1 - \varphi) d \sigma - \eta.
		\end{equation*}
		Therefore, for every $s \in U_0$, we have 
		\begin{eqnarray*}
			\int_K \varphi d |\mu(s)| &=& \|\mu(s)\| - \int_K (1 - \varphi) d |\mu(s)| \\
			&\leq& \|\mu\| - \int_K (1 - \varphi) d |\mu(s)| \\
			&<& \|\mu\| - \int_K(1 - \varphi)d \sigma + \eta \\
			&=& \|\mu\| - \|\mu(s_1)\| + \int_K \varphi d \sigma + \eta.
		\end{eqnarray*}
		Hence, for $s \in U_0$, we obtain
		\begin{eqnarray*}
			\int_K |q -1| d |\mu(s)| &=& 2 \int_K \varphi d |\mu(s)| \\
			&<& 2 \left( \|\mu\| - \|\mu(s_1)\| + \int_K \varphi d \sigma + \eta \right) \\
			&\stackrel{(\ref{eq5})}{<}& 2 a_0 \delta + \int_K |q-1| d \sigma + 2 \eta \\
			&\stackrel{(\ref{eq10})}{<}& \sqrt{2 \|\mu\| \delta} + 2 a_0 \delta + \gamma + 2 \eta \\
			&\stackrel{(\ref{eq11})}{<}& \sqrt{2\|\mu\| \delta} + 2 \delta. \hspace{4.5cm} (\#_2)
		\end{eqnarray*}
		Once again, choose $\psi \in C(S, [0,1])$ to be such that $\psi(s_1)=1$ and $\supp \psi \subseteq U_0$. Define 
		\begin{equation*}
			d\mu'(s):=((1-\psi(s))+\psi(s)q) d\mu(s).
		\end{equation*}
		for every $s \in S$. Notice first that the coefficient $(1 - \psi(s)) + \psi q$ is a convex combination of $1$ and an element in $\D$. This means that we get $\|\mu'\| \leq \|\mu\|$. Now, let us see that $\mu': S \rightarrow \mathcal{M}(K)$ is $w^*$-continuous. For this, we must prove that, for every, $\phi \in C(K)$, the scalar function $s \mapsto \int_K \phi d \mu'(s)$ is continuous on $S$. Indeed, for every $\phi \in C(K)$, for a fixed $s \in S$, we have that 
		\begin{equation*}
			\int_K \phi d \mu'(s) = (1 - \psi(s)) \int_K \phi d \mu(s) + \psi(s) \int_K \phi q d \mu(s).
		\end{equation*}
		Since $s \mapsto \psi(s)$ is continuous since $\psi \in C(S)$, $s \mapsto \int_K \phi d \mu(s)$ is also continuous since $\mu: S \rightarrow \mathcal{M}(K)$ is $w^*$-continuous and $\phi \in C(K)$ and $q \in C(K)$, and again by the $w^*$-continuity of $\mu$, $s \mapsto \int_K \phi q d \mu(s)$ is continuous, we have that $s \mapsto \int_K \phi d \mu'(s)$ is continuous and $S$ and, therefore, $\mu': S \rightarrow \mathcal{M}(K)$ is $w^*$-continuous.

		Moreover, since $|\mu(s)|(A) = 0$ if $s \in U_0$, we have that $|\mu'(s)|(A) = 0$ for every $s \in U_0$. At $s_1 \in S$, we have that 
		\begin{equation*}
			\re \int_K h d \mu'(s_1) = \re \int_K h q d \mu(s_1) \stackrel{(\#_1)}{>} \|\mu\| - r \delta \geq \|\mu'\| - r \delta.
		\end{equation*}
		By scalar continuity, this strict inequality holds on a nonempty open set $U' \subseteq U_0$ containing $s_1$.

		Finally, for $s \in U_0$, since $d(\mu'(s) - \mu(s)) = \psi(s)(q-1) d \mu(s)$, we have that 
		\begin{equation*}
			\|\mu'(s) - \mu(s)\| \leq \int_K |q-1| d |\mu(s)| \stackrel{(\#_2)}{<} \sqrt{2\|\mu\| \delta} + 2 \delta
		\end{equation*}
		while $\mu'(s) = \mu(s)$ outside $\supp \psi \subseteq U_0$. Taking the supremum over $s \in S$, we get (4). This proves Case 2 and, consequently, the lemma.
	\end{proof}

	\section{Proof of Theorem A}
	
	We are ready now to prove Theorem A. The proof of it combines Lemma \ref{lemma2} and Lemma \ref{lemma3}. Starting from an operator which almost attains its norm at a function $f_0$, Lemma \ref{lemma2} provides a nearby function $h$ and a first perturbation of the representing map such that $h$ has modulus one on the part of $K$ where the relevant measures are concentrated. Lemma \ref{lemma3} can then be applied repeatedly, while keeping both $h$ and this concentration property unchanged, to obtain successively
	better estimates for the integral of $h$. The resulting sequence of representing maps converges to a $w^*$-continuous map which attains its norm at $h$. Since $h$ remains close to $f_0$ and the successive perturbations are quantitatively controlled, the corresponding operator is close to the original one. This yields the simultaneous approximation of the operator and the almost norm-attaining function
	required by the Bollobás theorem.
	
	\begin{proof}[Proof of Theorem A] Fix $0 < \eps < 1$. Let us define all the parameters we need in this proof, which are quite a few as one might guess. Choose some number $1/2 < r < 1$. Choose also $\kappa > 0$ so small  that 
		\begin{equation} \label{eq12}
			\frac{\sqrt{2 \kappa}}{1-\sqrt{r}} + \frac{2 \kappa}{1-r} < \frac{\eps}{4}.
		\end{equation}
		Choose $0 < a < 1$ close to 1 so that 
		\begin{equation} \label{eq13}
			1 - a < \min \left\{ \eps, \frac{\kappa}{3} \right\}.
		\end{equation}
		Then choose numbers $b, c \in \R$ with 
		\begin{equation} \label{eq14}
			a < b < c < 1.
		\end{equation}
		Choose now $\tau > 0$ so small  that 
		\begin{equation} \label{eq15}
			\tau < \min \left\{ \frac{\kappa}{3}, \frac{\eps}{16} \right\}.
		\end{equation}
		Finally, let us choose $\eta > 0$ small enough so that 
		\begin{equation} \label{eq16}
			\eta \left( 1 + \frac{1}{1-c} \right) < \frac{\kappa}{3}
		\end{equation}
		and 
		\begin{equation} \label{eq17}
			\frac{\eta}{1-c} + \tau < \frac{\eps}{8}.
		\end{equation}
		This $\eta$ will be the one which will give us the result.

		Now we can finally start the proof. Let us take $T_0 \in \mathcal{L}(C(K), C(S))$ with $\|T_0\| = 1$ and $f_0 \in C(K)$ with $\|f_0\| = 1$ to be such that $\|T_0(f_0)\| > 1 - \eta$. By the representation theorem for operators from $C(K)$ into $C(S)$, there exists a $w^*$-continuous map $\mu_0: S \rightarrow \mathcal{M}(K)$ such that 
		\begin{equation*}
			T_0 f (s) = \int_K f d \mu_0(s) 
		\end{equation*}
		for every $f \in C(K)$ and every $s \in S$. Also, $\|\mu_0\| = \| T_0 \| = 1$. Since $T_0 f_0 \in C(S)$ and its norm is larger than $1-\eta$, there exists $s_0 \in S$ such that 
		\begin{equation*}
			\left| \int_K f_0 d \mu_0(s_0) \right| > 1 - \eta.
		\end{equation*}
		Now, take $\lambda \in \T$ to be such that, after setting $\mu:= \lambda \mu_0$, one gets 
		\begin{equation*}
			\re \int_K f_0 d \mu(s_0) > 1 - \eta
		\end{equation*}
		and $\|\mu\| = 1$. Let us apply Lemma \ref{lemma2} to this $\mu$, to $f_0$, to $s_0$ and to the parameters we have picked before, that is, to $a, b, c, \tau$ and consider the $\alpha$ from Lemma \ref{lemma2} to be $\alpha := \eta$. We then obtain $h \in C(K)$ with $\|h\| = 1$, a closed set $A \subseteq K$, a nonempty open set $U \subseteq S$ and a $w^*$-continuous map $\mu_1: S \rightarrow \mathcal{M}(K)$ such that 
		\begin{equation} \label{eq18}
			\|h - f_0\| \leq 1 - a \stackrel{(\ref{eq13})}{<} \eps,
		\end{equation}
		$|h(t)| = 1$ for every $t \in K \setminus A$, the set $K \setminus A$ is nonempty, $|\mu_1(s)|(A) = 0$ for every $s \in U$, $\|\mu_1\| \leq 1$,
		\begin{equation} \label{eq19}
			\|\mu_1 - \mu\| < m := \frac{\eta}{1 - c} + \tau 
		\end{equation}
		and 
		\begin{equation} \label{eq20}
			\re \int_K h d \mu_1(s) > 1 - d, \forall s \in U
		\end{equation}
		where $d:= (1 - a) + \eta + m$. From (\ref{eq13}), (\ref{eq16}) and (\ref{eq15}), we have 
		\begin{equation} \label{eq21}
			d = (1 - a) + \eta + \frac{\eta}{1-c} + \tau < \frac{\kappa}{3} + \frac{\kappa}{3} + \frac{\kappa}{3} = \kappa.
		\end{equation}
		Also, by (\ref{eq17}), we have that 
		\begin{equation} \label{eq22}
			m = \frac{\eta}{1-c} + \tau < \frac{\eps}{8}.
		\end{equation}

		Now, since $\|\mu_1\| \leq 1$, (\ref{eq20}) implies that 
		\begin{equation*}
			\re \int_K h d \mu_1(s) > \|\mu_1\| - d, \forall s \in U.
		\end{equation*}

		We now iterate Lemma \ref{lemma3}. Let us do this carefully. Set $\mu^{(0)} := \mu_1$, $U_0 := U$ and $\delta_n:= r^n d$ for $n=0,1,2,3, \ldots$. Inductively, suppose that $\mu^{(n)}: S \rightarrow \mathcal{M}(K)$ is $w^*$-continuous, $U_n \subseteq S$ is nonempty and open, and 
		\begin{equation*}
			|\mu^{(n)}(s)|(A) = 0, \forall s \in U_n, \ \ \|\mu^{(n)}\|\leq 1 
		\end{equation*}
		and 
		\begin{equation*}
			\re \int_K h d \mu^{(n)}(s) > \|\mu^{(n)}\| - \delta_n, \forall s \in U_n.
		\end{equation*}
		Applying Lemma \ref{lemma3} to $\mu^{(n)}$, $U_n$, $A$, $h$ and $\delta_n$, we obtain a $w^*$-continuous map $\mu^{(n+1)}: S \rightarrow \mathcal{M}(K)$ and a nonempty open set $U_{n+1} \subseteq U_n \subseteq S$ such that 
		\begin{itemize}
			\item[(i)] $|\mu^{(n+1)}(s)|(A)=0$ for every $s \in U_{n+1}$,
			\item[(ii)] $\|\mu^{(n+1)}\| \leq \|\mu^{(n)}\| \leq 1$, 
			\item[(iii)] $\re \int_K h d \mu^{(n+1)}(s) > \| \mu^{(n+1)}\| - r \delta_n = \| \mu^{(n+1)}\| - \delta_{n+1}, \forall s \in U_{n+1}$ and 
			\item[(iv)] $\|\mu^{(n+1)} - \mu^{(n)}\| \leq \sqrt{2 \|\mu^{(n)}\| \delta_n} + 2 \delta_n \leq \sqrt{2 r^n d} + 2 r^n d$.
		\end{itemize}
		Then, the induction follows.

		First, notice that 
		\begin{equation*}
			\sum_{n=0}^{\infty} \|\mu^{(n+1)} - \mu^{(n)}\| \stackrel{(\text{iv})}{\leq} \sum_{n=0}^{\infty} ( \sqrt{2r^n d} + 2r^n d) \stackrel{(\ref{eq21})}{<} \sum_{n=0}^{\infty} (\sqrt{2 r^n \kappa} + 2 r^n \kappa).
		\end{equation*}
		The right hand side sum is finite as it is a geometric series. In fact, 
		\begin{equation*}
			\sum_{n=0}^{\infty} \sqrt{2 r^n \kappa} = \sqrt{2 \kappa} \sum_{n=0}^{\infty} r^{n/2} = \frac{\sqrt{2 \kappa}}{1 - \sqrt{r}}
		\end{equation*}
		and 
		\begin{equation*}
			\sum_{n=0}^{\infty} 2 r^n \kappa = 2 \kappa \sum_{n=0}^{\infty} r^n = \frac{2 \kappa}{1 - r}.
		\end{equation*}
		Thus, we have that
		\begin{equation*}
			\sum_{n=0}^{\infty} \| \mu^{(n+1)} - \mu^{(n)}\| < \frac{\sqrt{2 \kappa}}{1 - \sqrt{r}} + \frac{2 \kappa}{1 -r } \stackrel{(\ref{eq12})}{<} \frac{\eps}{4}.
		\end{equation*}
		This means that $(\mu^{(n)})_{n=0}^{\infty}$ is a Cauchy sequence in the Banach space of all $w^*$-continuous maps from $S$ into $\mathcal{M}(K)$ endowed with the norm $\| \nu \| = \sup_{s \in S} \| \nu(s)\|$. So, there exists a $w^*$-continuous map $\nu: S \rightarrow \mathcal{M}(K)$ with 
		\begin{equation} \label{eq23}
			\| \nu - \mu_1 \| < \frac{\eps}{4}.
		\end{equation}
		Now, notice that 
		\begin{equation} \label{eq24}
			\|\nu - \mu\| \leq \|\nu - \mu_1\| + \|\mu_1 - \mu\| \stackrel{(\ref{eq23})}{<} \frac{\eps}{4} + m \stackrel{(\ref{eq22})}{<} \frac{\eps}{4} + \frac{\eps}{8} = \frac{3 \eps}{8}.
		\end{equation}
		Notice also that, from (\ref{eq24}) and the fact that $\|\mu\| = 1$, we have that $\nu$ is non-zero. We will now conclude the proof by proving the following two claims.
		
		\vspace{0.2cm}
		\noindent
		{\it Claim 1}: The operator represented by $\nu$ attains its norm at $h$.
		
		\begin{proof} For each $n$, let us choose $s_n \in U_n$. Then, 
			\begin{equation*}
				\re \int_K h d \mu^{(n)}(s_n) > \| \mu^{(n)}\| - \delta_n.
			\end{equation*}
			Thus, 
			\begin{eqnarray*}
				\sup_{s \in S} \left| \int_K h d \nu(s) \right| &\geq& \re \int_K h d \nu (s_n) \\
				&\geq& \re \int_K h d \mu^{(n)}(s_n) - \| \nu - \mu^{(n)}\| \\
				&>& \| \mu^{(n)}\| - \delta_n - \| \nu - \mu^{(n)}\|.
			\end{eqnarray*}
			Letting $n \rightarrow \infty$ and using that $\delta_n \rightarrow 0$ and $\mu^{(n)} \rightarrow \nu$ in norm as $n \rightarrow \infty$, we get 
			\begin{equation*}
				\sup_{s \in S} \left| \int_K h d \nu(s) \right| \geq \| \nu \|.
			\end{equation*}
			On the other hand, since $h \in S_{C(K)}$, we have that 
			\begin{equation*}
				\left| \int_K h d \nu(s) \right| \leq \| \nu(s) \| \leq \| \nu\| 
			\end{equation*}
			for every $s \in S$. This shows that 
			\begin{equation} \label{eq25}
				\sup_{s \in S} \left| \int_K h d \nu(s) \right| = \|\nu\|.
			\end{equation}
			Recall that $\lambda \in \T$ is the one such that $\mu = \lambda \mu_0$. Now, let us define $\tilde{\mu}: S \rightarrow \mathcal{M}(K)$ by 
			\begin{equation*}
				\tilde{\mu}(s):= \overline{\lambda} \cdot \frac{\nu(s)}{\|\nu\|}, \forall s \in S.
			\end{equation*}
			Since $\nu$ is $w^*$-continuous, so is $\tilde{\mu}$. Also,
			\begin{equation*}
				\| \tilde{\mu} \| = \sup_{s \in S} \| \tilde{\mu}(s) \| = \frac{1}{\| \nu\|} \sup_{s \in S} \| \nu(s)\| = 1.
			\end{equation*}
			Now, define $T \in \mathcal{L}(C(K), C(S))$ to be the operator represented by $\tilde{\mu}$, that is,
			\begin{equation*}
				Tf(s) = \int_K f d \tilde{\mu}(s) 
			\end{equation*}
			for every $f \in C(K)$ and $s \in S$. Then, $\|T\| = \|\tilde{\mu}\| = 1$. Also, by (\ref{eq25}), we have that 
			\begin{equation*}
				\|T(h)\| = \frac{1}{\|\nu\|} \sup_{s \in S} \left| \int_K h d \nu(s) \right| = 1. \hspace{0.5cm} (\#_3)
			\end{equation*}
		\end{proof}

		\vspace{0.2cm}
		\noindent
		{\it Claim 2}: $\|T - T_0\| < \eps$.
		
		\begin{proof} By the representation theorem, we have that 
			\begin{eqnarray*}
				\|T - T_0\| = \| \tilde{\mu} - \mu_0\| &=& \left\| \overline{\lambda} \cdot \frac{\nu}{\|\nu\|} - \mu_0 \right\| \\
				&=& \left\| \frac{\nu}{\|\nu\|} - \lambda \mu_0 \right\| \\
				&\leq& \left\| \frac{\nu}{\|\nu\|} - \nu \right\| + \| \nu - \mu\| \\
				&=& | \| \nu\| - \|\mu\| | + \| \nu - \mu \|  
				\leq 2 \| \nu - \mu \| 
				\stackrel{(\ref{eq24})}{<} \frac{3 \eps}{4} < \eps. \hspace{0.5cm} (\#_4)
			\end{eqnarray*}
		\end{proof}

		We already have by (\ref{eq18}) that $\|h - f_0\| < \eps$. Since $T$ attains its norm at $h$ by ($\#_3$) and $\|T - T_0\| < \eps $ by ($\#_4$), we are done.
	\end{proof}

	Let us also observe that all the perturbations used in the proof of Theorem A preserve compactness. Indeed, if $T_0$ is compact, then the operator represented by $\mu=\lambda\mu_0$ is the compact operator $\lambda T_0$. On the other hand, the operator $T_1$ represented by the map $\mu_1$ from Lemma \ref{lemma2} satisfies
	\begin{equation*}
		[T_1f](s) = \lambda[T_0f](s) - \lambda\psi(s)[T_0(fG)](s),
	\end{equation*}
	and hence it is compact. Moreover, if the operator $T_n$ represented by $\mu^{(n)}$ is compact, then in the first case of Lemma \ref{lemma3}, the operator $T_{n+1}$ is obtained from $T_n$ by adding a rank-one operator, while in the second case it satisfies 
	\begin{equation*}
		[T_{n+1}f](s) = (1-\psi(s))[T_nf](s) + \psi(s)[T_n(fq)](s).
	\end{equation*}
	Thus, all the operators arising in the iterative construction are compact. In the proof of Theorem A, the limiting operator is also compact, since the space of compact operators is closed in the operator norm. Finally, multiplying an operator by a scalar and normalizing it do not affect compactness. Therefore, if the initial operator $T_0$ is compact, then the proof of Theorem A produces a compact operator $T$. We have the following corollary.

	\begin{corollary} \label{compact-case} Let $K$ and $S$ be compact Hausdorff spaces. For every $0<\eps<1$, there exists $\eta(\eps)>0$ such that, whenever $T_0\in S_{\mathcal K(C(K),C(S))}$ and $f_0\in S_{C(K)}$ satisfy $\|T_0f_0\|>1-\eta(\eps)$ there exist $T\in S_{\mathcal K(C(K),C(S))}$ and $h\in S_{C(K)}$ such that
		\begin{equation*}
			\|Th\|=1, \ \ \ \|T-T_0\|<\eps \ \ \ \mbox{and} \ \ \ \|h-f_0\|<\eps.
		\end{equation*}
	\end{corollary}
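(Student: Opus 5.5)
The plan is to rerun the proof of Theorem A verbatim, with the same $\eta(\eps)$ and the same choices of $r,\kappa,a,b,c,\tau$, and to check at every stage that the operator represented by the current $w^*$-continuous map stays in $\mathcal K(C(K),C(S))$. Given $T_0\in S_{\mathcal K(C(K),C(S))}$ and $f_0\in S_{C(K)}$ with $\|T_0f_0\|>1-\eta(\eps)$, the proof of Theorem A already yields $T\in S_{\mathcal L(C(K),C(S))}$ and $h\in S_{C(K)}$ with $\|Th\|=1$, $\|T-T_0\|<\eps$ and $\|h-f_0\|<\eps$. So the only new point is that $T$ is compact.

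First, $\mu=\lambda\mu_0$ represents $\lambda T_0$, which is compact. Next, the map $\mu_1$ of Lemma \ref{lemma2} represents
\[
T_1f=\lambda T_0f-\psi\cdot \lambda T_0(fG),
\]
that is, $T_1=\lambda T_0-M_\psi\circ\lambda T_0\circ M_G$. Here $M_G$ is multiplication by $G$ on $C(K)$ and $M_\psi$ is multiplication by $\psi$ on $C(S)$; both are bounded. Compact operators form a two-sided ideal, so $T_1$ is compact.

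For the iteration, suppose $T_n$, represented by $\mu^{(n)}$, is compact. In Case 1 of Lemma \ref{lemma3}, $T_{n+1}=T_n+a_0\delta_n\overline{h(t_0)}\,\psi\otimes\delta_{t_0}$, which is $T_n$ plus a rank-one operator. In Case 2,
\[
T_{n+1}=M_{1-\psi}T_n+M_\psi T_nM_q,
\]
which is again compact by the ideal property. So every $T_n$ is compact. The limit map $\nu$ is the norm limit of the $\mu^{(n)}$, and the norm on representing maps equals the operator norm. Hence the operator represented by $\nu$ is a norm limit of compact operators and lies in the closed subspace $\mathcal K(C(K),C(S))$.

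Finally, $T$ is obtained from that operator by multiplying by $\overline\lambda/\|\nu\|$, and $\nu\neq0$ by (\ref{eq24}). Scalar multiples preserve compactness, so $T\in S_{\mathcal K(C(K),C(S))}$, and the estimates carry over unchanged from Claims 1 and 2. The only step needing care is to write each perturbation explicitly as a composition with bounded multiplication operators. Once that is done, the ideal property and the closedness of $\mathcal K(C(K),C(S))$ give everything, and I expect no genuine obstacle.
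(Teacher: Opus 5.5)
Your proposal is correct and matches the paper's own argument for this corollary step for step. You rerun the proof of Theorem A and check compactness at each stage: $\lambda T_0$; the operator $T_1=\lambda T_0-M_\psi\lambda T_0M_G$ from Lemma~\ref{lemma2}; each $T_{n+1}$ from Lemma~\ref{lemma3} (a rank-one perturbation in Case 1, $M_{1-\psi}T_n+M_\psi T_nM_q$ in Case 2); the norm limit, since the compact operators are closed; and its scalar normalization, which is nonzero by (\ref{eq24}).
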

	
	\noindent 
	\textbf{Acknowledgements}: The authors would like to thank Miguel Martín for several fruitful conversations on the topic of this manuscript.
	
	\noindent 
	\textbf{Funding}: S. Dantas has been supported by the grants PID2021-122126NB-C31 and PID2021-122126NB-C33 funded by MICIU/AEI/ 10.13039/ 501100011033 and by ERDF/EU. Helena del Río was supported by Grant PRE2022-103590, funded by MICIU/AEI/ 10.13039/ 501100011033 and by ESF+, Grant PID2021-122126NB-C31, funded by MICIU/AEI/10.13039/ 501100011033 and by ERDF/EU, by the "María de Maeztu" Excellence Unit IMAG, funded by MICIU/AEI/10.13039/501100011033 under reference CEX2020-001105-M and by Junta de Andaluc\'ia, grant FQM-0185.

\end{document}